\title{Causally Disjoint Discs: Another $\mathbb{E}_n$-operad}
\author{Ryan Grady}
\address{Department of Mathematical Sciences\\Montana State University\\Bozeman, MT 59717}
\email{ryan.grady1@montana.edu}
\keywords{Factorization Algebra, AQFT, (Colored-)Operad}
\subjclass{Primary 18M75. Secondary 81T05.}
\begin{document}

\begin{abstract}
Motivated by (perturbative) quantum observables in Lorentzian signature we define a new operad: the operad of causally disjoint disks.  In order to describe this operad we use the orthogonal categories of Benini, Schenkel, and Woike and the prefactorization functor of Benini, Carmona, Grant-Stuart, and Schenkel.   Along the way we extend these constructions to the topological setting, i.e., (multi-)categories enriched over spaces.
\end{abstract}

\maketitle

\tableofcontents

\section{Introduction}

In the Euclidean setting there is a beautiful equivalence between locally constant factorization algebras and $\mathbb{E}_{\mathsf{n}}$-algebras, where $n$ is the dimension of space.  Such algebras arise as algebras of observables in topological (quantum) field theory, see \cite{CG} and \cite{L}.

Inspired by the work of Gwilliam and Rejzner, \cite{GR1} and \cite{GR}, we are pursuing a similar relationship in the Lorentzian setting. More specifically, the setting is that of perturbative algebraic quantum field theory.  There has been much work by Benini, Schenkel and collaborators on the operadic and homotopical structure of observables in this setting.  In this setting, analytic (and other) issues are subtle and require significant care, even in the free case, see for instance \cite{BS} and \cite{BMS}. We take a slightly different approach and consider the underlying (differential) topological structure.

To this end, we introduce the operad of \emph{causally disjoint discs} which is a version of the operad of little $n$-discs which incorporates the causal structure induced by a Lorentzian structure.  This operad which we denote by $\cP_{\mathsf{CD_n}}$ receives a map from little $(n-1)$-discs and maps to little $n$-discs.  The former map is actually a weak equivalence (Theorem \ref{thm}).  At the level of algebras, the composition of these maps is equivalent to that induced by the natural map of little $(n-1)$-discs into little $n$-discs. We also (briefly) discuss a version of our results for causally disjoint causal diamonds.

In constructing the operad of causally disjoint discs, we utilize the orthogonal categories and prefactorization functor of Benini--Schenkel and company which appear in \cite{BS} and \cite{BCS} respectively.  More accurately, we extend the definition of orthogonal category, functor, etc. to the setting of topological categories, i.e., those enriched over the category of spaces $\mathsf{Top}$.  We then define the prefactorization operation which associates a colored operad to an orthogonal category and prove functoriality of this operation.

We finish by describing a couple examples of causally disjoint disc algebras and their relation to $\mathbb{E}_\mathsf{n}$-algebras, before offering some speculation about using causally disjoint discs to describe Wick Rotation for ``sufficiently topological" pAQFTs.

\subsection*{Acknowledgements} I thank Victor Carmona for introducing me to orthogonal categories and David Ayala for several useful conversations.  Moreover, I am grateful to Alex Schenkel for some detailed comments on an earlier version of this note; especially the suggestion to consider causally convex neighborhoods as in Section \ref{sect:cd}. Finally, I thank the anonymous referee for their careful reading and for the suggestion to consider the connection to the locality relations of Guo, Paycha, and Zhang, see Remark \ref{rem:locality}. 

\subsection*{Funding Declaration} The author is supported by the Simons Foundation under Travel Support/Collaboration 9966728.

\subsection*{Data Availability} No data was generated by this work.

\subsection*{Competing Interests} The author states no competing interests.

\subsection*{Author Contribution} R.G.~is the sole author.

\subsection*{Ethics Declaration} Not applicable. 

\section{Preliminaries and conventions}

\subsection{(Colored)-Operads}

Throughout a colored operad will be defined as a multicategory.  We follow the conventions of \cite{EM} for multicategories.  In particular, our multicategories need not be small, but rather can have a (proper) class of objects.  Additionally, all multicategories will be symmetric.

An operad is simply a colored operad with one color/object.  An object of central interest is the operad of little $n$-discs, which is an $\mathbb{E}_n$ operad.  Much about this operad, e.g., its homology, is covered in Chapter 4 of \cite{F}. Moreover, $\mathbb{E}_n$ and several operads appearing in this note are \emph{topological operads}, i.e., they arise via multicategories enriched over topological spaces.   We will often ask our $\mathsf{Top}$-categories to be further tensored/copowered over $\mathsf{Top}$; the article \cite{K} contains all relevant details.  (We expect that our developments will work over a suitably nice symmetric monoidal category $(\cM, \otimes)$ not just $(\mathsf{Top}, \times)$.)

Finally, all spaces of (smooth) maps between manifolds will be equipped with the (smooth) compact open topology.  This topology agrees with the weak Whitney topology provided the domain is compact.

\subsection{Causal structure}

Throughout we will work on $n$-dimensional Minkowski space, $\RR^{1,n-1}$, i.e., the manifold $\RR^n$ equipped with its standard Lorentzian metric $\eta$ of signature $(1,n-1)$.  The space $\RR^{1,n-1}$ is time oriented and we will utilize the resulting causal structure.

\begin{definition}
A differentiable curve $\gamma \colon I \to \RR^{1,n-1}$ is \emph{causal} if at each time $t \in I$, $\gamma'(t)$ is timelike or null, i.e., 
\[
\eta(\gamma'(t), \gamma'(t)) \le 0, \quad \forall t \in I.
\]
\end{definition}

\begin{definition}
Let $S,T \subseteq \RR^{1,n-1}$ be subsets.  The subsets $S$ and $T$ are \emph{causally disjoint} (or \emph{causally separated}) if there exists no causal curves between any point of $S$ and any point of $T$.
\end{definition}

  \begin{lemma}\label{lem:cdprop}
  Causal disjointness defines a symmetric and transitive relation on the subsets of $\RR^{1,n-1}$.
  \end{lemma}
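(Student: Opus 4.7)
The lemma contains two separate assertions; I would handle them in turn.

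For symmetry, the key input is that the causality condition $\eta(\gamma'(t),\gamma'(t)) \le 0$ is invariant under reversal $v \mapsto -v$. Given any causal curve $\gamma\colon [a,b]\to \RR^{1,n-1}$ with $\gamma(a)\in S$ and $\gamma(b)\in T$, set $\tilde\gamma(t) := \gamma(a+b-t)$. Then $\tilde\gamma'(t) = -\gamma'(a+b-t)$, so $\eta(\tilde\gamma'(t),\tilde\gamma'(t)) = \eta(\gamma'(a+b-t),\gamma'(a+b-t)) \le 0$, and $\tilde\gamma$ is again a causal curve, now with $\tilde\gamma(a)\in T$ and $\tilde\gamma(b)\in S$. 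Thus the existential statements defining $S\perp T$ and $T\perp S$ are logically equivalent, and hence so are their negations.

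For transitivity, suppose $S\perp T$ and $T\perp U$; I want to conclude $S\perp U$. I would argue by contrapositive: assume instead that there is a causal curve $\gamma\colon[a,c]\to\RR^{1,n-1}$ with $\gamma(a)\in S$ and $\gamma(c)\in U$. The plan is to locate a parameter $b\in(a,c)$ with $\gamma(b)\in T$; the two restrictions $\gamma|_{[a,b]}$ and $\gamma|_{[b,c]}$ are themselves causal curves (the causality condition is pointwise), and they would then witness $S\not\perp T$ and $T\not\perp U$ respectively, contradicting the hypotheses. With such a splitting point in hand, the proof closes immediately.

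The main obstacle is producing this splitting point: a priori, a causal curve from a point of $S$ to a point of $U$ need not pass through $T$ at all. I would therefore expect the detailed implementation to invoke the form of causal disjointness that is actually operative in the paper's applications (e.g., the causally convex neighborhoods mentioned in the Introduction and developed in Section~\ref{sect:cd}), so that $T$ genuinely acts as a causal barrier between $S$ and $U$ whenever each is separately causally disjoint from $T$. Accordingly, I would devote most of the work to pinning down this structural property of $T$ and then verifying that, under it, any candidate causal curve from $S$ to $U$ is forced to intersect $T$, producing the required parameter $b$ and completing the contradiction.
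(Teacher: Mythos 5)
The paper offers no proof of this lemma at all (it is stated and immediately followed by a reformulation in terms of light cones), so there is nothing to compare your argument against line by line; what matters is whether your proposal is correct. Your symmetry argument is fine: reversing the parametrization preserves the causality condition on the tangent, so the existence of a causal curve from $S$ to $T$ is equivalent to the existence of one from $T$ to $S$. (One caveat worth flagging: under the paper's literal definition a causal curve need not be consistently future- or past-directed, and then any two points of $\RR^{1,n-1}$ are joined by a zigzag causal curve, making the relation vacuous on nonempty sets. The intended notion is surely $S\cap\bigl(J^{+}(T)\cup J^{-}(T)\bigr)=\emptyset$, i.e.\ no \emph{future-directed} causal curve in either direction, and your reversal argument is exactly the right formalization of symmetry for that notion.)

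For transitivity, however, your own diagnosis of the obstacle is the whole story: there is no way to close the gap, because the statement as literally written is false. Take $n=2$, coordinates $(t,x)$ on $\RR^{1,1}$, and the singletons $S=\{(0,0)\}$, $U=\{(1,0)\}$, $T=\{(0,2)\}$. Then $S$ and $T$ are spacelike separated, as are $T$ and $U$, so $S\perp T$ and $T\perp U$; but $(1,0)$ lies in the chronological future of $(0,0)$, so $S\not\perp U$. Spacelike separation is simply not a transitive relation, and no structural hypothesis on $T$ (causal convexity or otherwise) can rescue transitivity for arbitrary subsets, so the search for a ``causal barrier'' property is a dead end. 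What the paper actually needs from causal disjointness downstream --- for $\perp_{\mathsf{CD_n}}$ to satisfy the ``stable under composition'' axiom of an orthogonal category and hence for $\cP^{\perp}_{\mathsf{CD_n}}$ to be a well-defined multicategory --- is not transitivity but monotonicity under inclusion: if $S\perp T$, $S'\subseteq S$, and $T'\subseteq T$, then $S'\perp T'$. That is immediate from the definition (any causal curve between $S'$ and $T'$ is in particular one between $S$ and $T$), and is presumably what the word ``transitive'' was intended to convey. I would recommend proving symmetry as you did, replacing the transitivity claim with this stability statement, and noting explicitly that causal disjointness is \emph{not} transitive.
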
 

  An equivalent way to think of causally disjoint subsets is that $S$ is not contained in the union of future light cones over the points of $T$, nor is $T$ in the union of future light cones over $S$. Or, $S$ is not contained in the union of future and past light cones of points in $T$; symmetrically, $T$ is not contained in the union of forward and backward light cones of $S$. See Figures \ref{fig:1} and \ref{fig:2} below.
  
  \vspace{1ex}
  
  \begin{center}
\begin{minipage}{.5\textwidth}
\centering
       \begin{tikzpicture}[scale=0.6]
       %\draw[gray!20,fill=gray!20] plot coordinates{(-4.207,1.207)(-7.414,-2)(0.414,-2)(-2.793,1.207)(-4.207,1.207)}; 
       %\draw[gray!20,fill=gray!20] plot coordinates{(-4.207,-.207)(-6.414,2)(-0.586,2)(-2.793,-.207)(-4.2017,-.207)};
              \fill[pattern=north west lines, pattern color = gray!75]plot coordinates{(-10.1,0)(-12.1,2)(-3.9,2)(-5.9,0)(-10.1,0)}; 
                            \fill[pattern=north west lines, pattern color = gray!75] plot coordinates{(-10.1,0)(-12.1,-2)(-3.9,-2)(-5.9,0)(-10.1,0)}; 
       \draw[gray!60,fill=gray!60] (-8,0) ellipse (2cm and 0.5cm);
       \draw node at (-8,0) {$S$};
       %\node[trapezium, trapezium angle = 135, minimum width = 10cm, draw] at (-8,2) {};
       %also path clipping
       \draw[gray!60,fill=gray!20] (-3.5,.5) circle (1cm);
       \node at (-3.5,.5) {$T$};
       \end{tikzpicture}
\captionof{figure}{$S$ and $T$ are causally disjoint subsets.}\label{fig:1}
    \end{minipage}%
    \begin{minipage}{0.5\textwidth}
        \centering
  \begin{tikzpicture}[scale=0.6]
   \draw[gray!60,fill=gray!20] (-4.5,1) circle (1cm);
       \node at (-4.5,1) {$T$};
              \fill[pattern=north west lines, pattern color = gray!75]plot coordinates{(-10.1,0)(-12.1,2)(-3.9,2)(-5.9,0)(-10.1,0)}; 
                            \fill[pattern=north west lines, pattern color = gray!75] plot coordinates{(-10.1,0)(-12.1,-2)(-3.9,-2)(-5.9,0)(-10.1,0)}; 
       \draw[gray!60,fill=gray!60] (-8,0) ellipse (2cm and 0.5cm);
       \draw node at (-8,0) {$S$};
       %\node[trapezium, trapezium angle = 135, minimum width = 10cm, draw] at (-8,2) {};
       %also path clipping
       %\draw[gray!20,fill=gray!20] plot coordinates{(-4.207,1.207)(-7.414,-2)(0.414,-2)(-2.793,1.207)(-4.207,1.207)}; 
       %\draw[gray!20,fill=gray!20] plot coordinates{(-4.207,-.207)(-6.414,2)(-0.586,2)(-2.793,-.207)(-4.2017,-.207)};
      
       \end{tikzpicture}
\captionof{figure}{The subsets $S$ and $T$ are not causally disjoint.}\label{fig:2}    \end{minipage}
\end{center}

%%%%%%%%%%%%%%%%%%%%%%%%%%%%%%%%%
\section{Orthogonal categories and prefactorization}

In this section we will recall and then extend definitions of orthogonal categories and the prefactorization functor as in \cite{BS} and \cite{BCS} respectively.

\subsection{Orthogonal categories}

Orthogonal categories were developed in \cite{BS} in order to codify commutation relations in algebraic quantum field theory.  We will see that they also provide a convenient way to describe/construct (colored)-operads.

\begin{definition}
An \emph{orthogonal category} is a pair $(C,\perp)$ where $C$ is a locally small category and $\perp \subseteq \mathop{\mathrm{Mor}} C \times_{\mathop{\mathrm{Obj}} C} \mathop{\mathrm{Mor}} C$ (common target) such that $\perp$ is symmetric and stable under composition, i.e., for all composable morphisms $g, h_1, h_2$ and $(f_1, f_2) \in \perp$, then
$(g \circ f_1 \circ h_1 , g \circ f_2 \circ h_2) \in \perp$.
\end{definition}

From now on we will also assume that our categories $C$ have finite coproducts and products.

\begin{example}
The original example of an orthogonal category in \cite{BS} is $(\mathbf{Loc}, \perp)$ where $\mathbf{Loc}$ is the category of Lorentzian manifolds and orientation and time-orientation preserving isometric embeddings, with a pair of morphisms with common target $(f_1, f_2)$ in $\perp$ if and only if their images are causally disjoint.  There is also a subcategory $\mathbf{Loc^{rc}} \subset \mathbf{Loc}$ which consists of morphisms that are further Cauchy or relatively compact; this subcategory plays a central role in \cite{BCS}.
\end{example}

\begin{remark}\label{rem:locality}
The work of Guo, Paycha, and Zhang \cite{GPZ} is also inspired by locality in quantum field theory and generalizes causal disjointness.  In their work, a \emph{locality relation} is simply a symmetric bilinear relation; more significantly, the authors develop the notion of \emph{locality monoid} and \emph{locality algebra} (complete with a theory of ideals).  The relationship between (small) orthogonal categories and locality sets/relations is as follows:
\begin{itemize}
\item[(a)] If $(C,\perp)$ is a small orthogonal category, then the set of morphisms $\mathop{\mathrm{Mor}} C$ inherits a locality relation from the orthogonality relation $\perp$.
\item[(b)] Composition of morphisms (in $C$) does not, in general, define a locality semi-group structure.
\item[(c)] However, for a fixed object $c \in C$, composition does define a locality semi-group structure on $\mathrm{Hom}_C (c,c)$, and provided $\perp \neq \emptyset$, then the automorphisms of the given object, $\mathrm{Aut}_C (c,c)$, is a locality monoid (group even). 

\end{itemize}

\end{remark}

\begin{definition}\label{defn:oFunctor} A \emph{morphism between orthogonal categories} $(B, \perp_B)$ and $(C, \perp_C)$ is a functor $F \colon B \to C$ which preserves finite products and coproducts and is $\perp$-compatible, i.e., $F(\perp_B) \subseteq \perp_C$.
\end{definition}

Hence, we have a category of orthogonal categories: $\mathsf{\perp \hspace{-0.75ex}Cat}$.  It is straightforward to enrich this notion over the category of topological spaces $\mathsf{Top}$.  We will denote the resulting category of enriched orthogonal categories by $\mathsf{Top}\text{--}\mathsf{\perp \hspace{-0.75ex}Cat}$.

\subsection{The functor of prefactorization}

Building on \cite{BCS}, we define a functor which takes orthogonal categories to colored operads (multicategories). 

\begin{definition}
Let $(C, \perp)$ be an orthogonal category enriched in $\mathsf{Top}$ (and which has finite coproducts).  The \emph{prefactorization operad associated to} $(C,\perp)$ is the multicategory, enriched in $\mathsf{Top}$, $\cP_C^\perp$ with the same objects as $C$ and morphisms
\[
\cP_C^\perp \begin{pmatrix} c_1, c_2, \dotsc, c_k \\ t \end{pmatrix} = \left \{ (f_i) \in \prod_{i=1}^k \mathrm{Hom}_C (c_i , t) : (f_i , f_j) \in \perp \text{ for } i \neq j \right \},
\]
with composition inherited from that of $C$ and where permutations act naturally on the  products of morphisms.
\end{definition}

\begin{prop}
If $(C, \perp)$ is an orthogonal category enriched in $\mathsf{Top}$, then $\cP_C^\perp$ is a (symmetric) multicategory enriched in $\mathsf{Top}$.
\end{prop}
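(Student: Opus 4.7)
The plan is to verify the axioms of a symmetric multicategory enriched in $\mathsf{Top}$ one at a time, with the substantive content concentrated in checking that the prescribed composition and symmetric action actually land in the subsets defining the multimorphism spaces. The rest will follow essentially for free from the fact that $C$ is already a topologically enriched category with finite coproducts, because the $\perp$-condition cuts out a subspace of a product of hom-spaces in $C$.

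First I would fix the topology: the multimorphism space $\cP_C^\perp\binom{c_1,\dotsc,c_k}{t}$ is a subset of $\prod_i \mathrm{Hom}_C(c_i,t)$ and inherits the subspace topology, so in particular each hom-space is a space. The unit at $c$ is simply $\mathrm{id}_c \in \mathrm{Hom}_C(c,c)$ (the $\perp$-condition is vacuous for a one-element tuple). The composition
\[
\cP_C^\perp\tbinom{d_{1,1},\dotsc,d_{k,\ell_k}}{t} \supseteq \cP_C^\perp\tbinom{c_1,\dotsc,c_k}{t}\times\prod_{i=1}^k \cP_C^\perp\tbinom{d_{i,1},\dotsc,d_{i,\ell_i}}{c_i}
\]
is defined by $\bigl((f_i),(g_{i,j})_j\bigr)\mapsto (f_i\circ g_{i,j})_{i,j}$, and the symmetric action permutes the entries of a tuple.

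The main obstacle, and the one place where the orthogonal category axioms do real work, is verifying that these operations land in the $\perp$-constrained subsets. For composition I would separate two cases: if $(i,j)\ne(i',j')$ with $i=i'$, then the condition $(f_i\circ g_{i,j},f_i\circ g_{i,j'})\in\perp$ follows from $(g_{i,j},g_{i,j'})\in\perp$ by the closure axiom of $\perp$ under precomposition and postcomposition (take $g=f_i$, $h_1=h_2=\mathrm{id}$); if $i\ne i'$, then $(f_i,f_{i'})\in\perp$ gives $(f_i\circ g_{i,j},f_{i'}\circ g_{i',j'})\in\perp$ by the same axiom (take $g=\mathrm{id}$ and $h_1=g_{i,j}$, $h_2=g_{i',j'}$). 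For the symmetric action, well-definedness is immediate from the symmetry of $\perp$, applied pairwise.

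With well-definedness in hand, associativity and unitality of composition, together with the equivariance axioms relating the $\Sigma_k$-action to composition, reduce to the corresponding statements in the underlying category $C$ once one notes that both sides land in the appropriate $\perp$-constrained subspace and agree there as tuples of morphisms of $C$. Finally, continuity of composition and of the symmetric action follows from the continuity of composition in $C$ and of permutation of coordinates in a finite product of spaces, by restricting to the subspace; the unit $\mathrm{id}_c$ is a point, so its inclusion is continuous trivially. This completes the verification that $\cP_C^\perp$ is a symmetric multicategory enriched in $\mathsf{Top}$.
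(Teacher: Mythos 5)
Your proof is correct and follows essentially the same route as the paper's: well-definedness of composition via the two cases (same block versus different blocks) handled by stability of $\perp$ under composition, well-definedness of the symmetric action via symmetry of $\perp$, and the remaining axioms inherited from $C$ as a $\mathsf{Top}$-enriched category. Your version is somewhat more explicit about the subspace topology, units, and the precise substitutions into the stability axiom, but the content is the same.
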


\begin{proof}
That composition is well-defined follows from requirement that $\perp$ is stable under composition.  Indeed, for a multimorphism $\vec{f} = (f_i)_{i=1}^k$ and a $k$-tuple of (composable) multimorphisms $(\vec{g_1}, \vec{g_2}, \dotsc , \vec{g_k})$, composition is given by
\[ 
\vec{f} (\vec{g_1} , \dotsc , \vec{g_n} ) = (f_1 \circ g_{11}, f_1 \circ g_{12}, \dotsc, f_k \circ g_{k j_k} ),
\]
so $f_i \circ g_{i \ell} \perp f_i \circ g_{i \ell'}$ as $g_{i \ell} \perp g_{i \ell'}$ and $f_i \circ g_{i \ell} \perp f_{i'} \circ g_{i' \ell'}$ as $f_i \perp f_{i'}$.
Since transpositions generate the symmetric groups and $\perp$ is symmetric, the action of $\Sigma_k$ on the collection of $k$-ary maps is well-defined.  Finally, that composition is equivariant with respect to the symmetric group actions follows from the enriching category $\mathsf{Top}$ being symmetric monoidal.
\end{proof}

\begin{remark}
A word of caution, in \cite{BS} there is another colored operad associated to an orthogonal category $(C, \perp)$ which is denoted $\sO_C$, $\sO_C$ and $\cP^\perp_C$ are not the same!  Indeed, it follows from Theorem 2.9 of \cite{BPSW} that $\sO_C \simeq \cP^\perp_C \otimes_{\mathrm{BV}} \mathsf{As}$, where $\mathsf{As}$ is the (unital) associative operad and $\otimes_{\mathrm{BV}}$ is the Boardman--Vogt tensor product.
\end{remark}

\begin{remark}
In \cite{BCS}, the authors construct another prefactorization operad which incorporates \emph{time-orderability}.  This operad does not come from an orthogonal category as time-orderability is not a binary relation.
\end{remark}

\begin{prop}
Prefactorization actually defines a functor $\cP^\bullet_\bullet \colon  \mathsf{Top}\text{--}\mathsf{\perp \hspace{-0.75ex}Cat} \to \mathsf{Top}\text{--}\mathsf{MultiCat}$.
\end{prop}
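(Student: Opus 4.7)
The plan is to define the action of $\cP^\bullet_\bullet$ on morphisms and then verify the multifunctor axioms plus standard functoriality. Given a morphism $F \colon (B, \perp_B) \to (C, \perp_C)$ of topologically enriched orthogonal categories, I would define $\cP_F \colon \cP_B^{\perp_B} \to \cP_C^{\perp_C}$ to agree with $F$ on objects, and on a multimorphism $(f_i)_{i=1}^k$ with common target $t$, to send
\[
\cP_F(f_1,\dotsc,f_k) = (F(f_1), \dotsc, F(f_k)),
\]
with common target $F(t)$.

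The first thing to check is that this lands in $\cP_C^{\perp_C}$. For $i \neq j$ we have $(f_i, f_j) \in \perp_B$ by hypothesis, and the $\perp$-compatibility clause of Definition \ref{defn:oFunctor} gives $(F(f_i), F(f_j)) \in \perp_C$, so $\cP_F(\vec f)$ is a bona fide multimorphism. Continuity of $\cP_F$ on each hom-space follows because $F$ is enriched, so each component map $\mathrm{Hom}_B(b_i, t) \to \mathrm{Hom}_C(F b_i, F t)$ is continuous, and $\cP_F$ on multimorphisms is simply the restriction of the induced continuous map on the product of these hom-spaces.

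Next, I would verify the multifunctor axioms. Preservation of identities is immediate since the unary identity multimorphism in $\cP_B^{\perp_B}$ is the identity of $B$, which $F$ sends to an identity of $C$. Preservation of composition: for $\vec f = (f_i)_{i=1}^k$ and composable $(\vec{g_1}, \dotsc, \vec{g_k})$, both $\cP_F(\vec f(\vec{g_1}, \dotsc, \vec{g_k}))$ and $\cP_F(\vec f)(\cP_F(\vec{g_1}), \dotsc, \cP_F(\vec{g_k}))$ equal $(F(f_i \circ g_{i\ell}))_{i,\ell}$, since $F$ is an ordinary functor. Equivariance under the symmetric group actions is automatic because those actions are defined by permuting the tuple entries, and $\cP_F$ is applied componentwise.

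Finally, for functoriality of the assignment $F \mapsto \cP_F$: both identity preservation ($\cP_{\mathrm{id}_{(C,\perp)}} = \mathrm{id}_{\cP_C^\perp}$) and composition preservation ($\cP_{F \circ G}(\vec f) = (FG(f_i))_i = \cP_F(\cP_G(\vec f))$) follow from the componentwise definition. None of the steps is a serious obstacle; the only content is that $\perp$-compatibility of $F$ is exactly what is needed to guarantee that multimorphisms are sent to multimorphisms, while the product/coproduct preservation in Definition \ref{defn:oFunctor} is not used here (it would enter in further constructions involving the multicategorical structure built from $\sqcup$). Thus the main subtlety is simply confirming that $\perp$-compatibility is precisely the right condition to make the assignment well-defined on multimorphisms.
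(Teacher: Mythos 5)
Your proposal is correct and follows essentially the same route as the paper: define $\cP_F$ componentwise, use $\perp$-compatibility for well-definedness on multimorphisms, enrichment for continuity, and functoriality of $F$ for compatibility with composition and the symmetric group actions. You spell out the verifications (and the functoriality of $F \mapsto \cP_F$ itself) in more detail than the paper does, and your observation that preservation of finite (co)products is not actually used here is accurate.
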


\begin{proof}
Let $F \colon (B, \perp_B) \to (C, \perp_C)$ be a functor of (topological) orthogonal categories. That $\cP(F) \colon \cP_B^{\perp_B} \to \cP_C^{\perp_C}$ is well defined at the level of objects and sets of multimorphisms follows directly from the definition of functor between orthogonal categories (Definition \ref{defn:oFunctor}).  Moreover,  composition is inherited from the orthogonal categories themselves, so since $F$ intertwines composition, so does $\cP(F)$.  Finally, symmetric group equivariance is a (slightly) tedious verification which boils down to $F$ being a functor of enriched categories and compatible with the product of mapping spaces.
\end{proof}

\begin{cor}
If $F \colon (B, \perp_B) \to (C, \perp_C)$ is a functor of orthogonal categories and $\cD$ is any symmetric monoidal category (enriched and tensored over $\mathsf{Top}$), then $F$ induces a map at the level of algebras
\[
F^\ast \colon \mathsf{Alg}_{\cP_C^{\perp_C}} (\cD) \to \mathsf{Alg}_{\cP_B^{\perp_B}} (\cD).
\]
\end{cor}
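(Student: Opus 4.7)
The plan is to realize $F^\ast$ as precomposition with the multicategory morphism $\cP(F)$ produced by the preceding proposition, and then verify that this precomposition is well-defined on the algebra categories in question.

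First I would unpack the target of the desired functor. An algebra of a $\mathsf{Top}$-enriched multicategory $\cP$ with values in a $\mathsf{Top}$-enriched, $\mathsf{Top}$-tensored symmetric monoidal category $\cD$ is equivalently a morphism of $\mathsf{Top}$-enriched multicategories $A \colon \cP \to \underline{\mathrm{End}}(\cD)$, where $\underline{\mathrm{End}}(\cD)$ is the endomorphism multicategory built from $\cD$ (whose $(c_1,\ldots,c_k;t)$-space of operations is $\mathrm{Hom}_\cD(c_1 \otimes \cdots \otimes c_k, t)$, with the enrichment using the tensoring over $\mathsf{Top}$). A morphism of algebras is then an enriched multi-natural transformation. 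The tensoring hypothesis on $\cD$ is precisely what ensures that $\underline{\mathrm{End}}(\cD)$ is a $\mathsf{Top}$-enriched multicategory in the first place.

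Second, by the preceding proposition the functor $F$ gives rise to a morphism of $\mathsf{Top}$-enriched multicategories $\cP(F) \colon \cP_B^{\perp_B} \to \cP_C^{\perp_C}$. I would then define $F^\ast$ by precomposition: for $A \in \mathsf{Alg}_{\cP_C^{\perp_C}}(\cD)$, regarded as an enriched multicategory morphism $A \colon \cP_C^{\perp_C} \to \underline{\mathrm{End}}(\cD)$, set
\[
F^\ast A \;=\; A \circ \cP(F) \colon \cP_B^{\perp_B} \to \underline{\mathrm{End}}(\cD),
\]
and for a morphism of algebras $\eta \colon A \Rightarrow A'$ set $F^\ast \eta = \eta \ast \cP(F)$, the whiskering of $\eta$ with $\cP(F)$. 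Composition of $\mathsf{Top}$-enriched multicategory morphisms is again such a morphism, so $F^\ast A$ is genuinely an algebra; whiskering of an enriched natural transformation by an enriched functor is again an enriched natural transformation, so $F^\ast \eta$ is a morphism of algebras.

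The only thing to check is functoriality of $F^\ast$, and this is immediate: associativity of composition of enriched multicategory morphisms gives $F^\ast(\eta' \circ \eta) = F^\ast \eta' \circ F^\ast \eta$ and $F^\ast(\mathrm{id}_A) = \mathrm{id}_{F^\ast A}$. There is no essential obstacle here; this is why the statement is labeled a corollary. The only subtlety worth flagging is the tensoring hypothesis on $\cD$, which is exactly what makes $\underline{\mathrm{End}}(\cD)$ enriched in $\mathsf{Top}$ so that the precomposition $A \circ \cP(F)$ makes sense as an enriched morphism.
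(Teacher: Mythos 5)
Your proposal is correct and matches the intended argument: the paper leaves this corollary without a written proof precisely because it follows by precomposition with the operad map $\cP(F)$ supplied by the preceding proposition, which is exactly what you do. Your additional remarks on the endomorphism multicategory and the role of the tensoring hypothesis are accurate elaborations of the same route.
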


%%%%%%%%%%%%%%%%%%%%%%%%%%%%%%%
\section{Causally disjoint discs}

Let $D^n$ be the Euclidean unit disc in $\RR^{1,n-1}$, where the latter is equipped with its standard Lorentzian structure.  Recall that a \emph{rectilinear} embedding $\varepsilon \colon D^n \to D^n$ is a smooth (topological) embedding of the form $\varepsilon(\vec{x})=r\vec{x} + \vec{b}$ for $r \in \RR_{>0}$ and $\vec{b} \in \RR^n$. 

\begin{definition}
Let $n \in \NN$.  Define \emph{the orthogonal category of causally disjoint discs} as follows.  The category $\mathsf{CD_n}$ has one object, $\mathrm{Hom} (\ast, \ast) = \mathrm{Rect}(D^n, D^n)$, and $(f,g) \in \perp_{\mathsf{CD_n}}$ if and only if $f(D^n)$ and $g(D^n)$ are causally disjoint (in $\RR^{1, n-1}$).
\end{definition}

Recall our convention that $\mathrm{Rect}(D^n, D^n)$ is equipped with the compact open topology, so $\mathsf{CD_n}$ is an orthogonal category enriched over $\mathsf{Top}$.

\begin{lemma}
The orthogonal category $\mathsf{CD_n}$ is well-defined.
\end{lemma}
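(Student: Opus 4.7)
The plan is to verify the two axioms that make $\perp_{\mathsf{CD_n}}$ an orthogonality relation: symmetry and stability under composition. The common-target condition is automatic since $\mathsf{CD_n}$ has a single object. Symmetry is immediate from Lemma \ref{lem:cdprop}: if $f(D^n)$ and $g(D^n)$ are causally disjoint, so are $g(D^n)$ and $f(D^n)$.

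For stability, I would fix $(f_1, f_2) \in \perp_{\mathsf{CD_n}}$ and arbitrary $g, h_1, h_2 \in \mathrm{Rect}(D^n, D^n)$. The first observation is subset monotonicity: any pair of subsets of causally disjoint sets is itself causally disjoint (since a causal curve between the subsets would already be one between the larger sets). Because each $h_i$ is a self-embedding, $h_i(D^n) \subseteq D^n$, and hence $(g \circ f_i \circ h_i)(D^n) \subseteq g(f_i(D^n))$ for $i = 1, 2$. By monotonicity, it then suffices to establish that $g(f_1(D^n))$ and $g(f_2(D^n))$ are causally disjoint, which in turn follows from the stronger claim that any rectilinear self-embedding of $\RR^{1,n-1}$ carries causally disjoint subsets to causally disjoint subsets.

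The principal geometric step is this last claim, and I expect it to be the main (if mild) obstacle. I would decompose $g(\vec{x}) = r\vec{x} + \vec{b}$ as a positive dilation followed by a translation. Translations are isometries of $\RR^{1,n-1}$ and so trivially send causal curves to causal curves. Dilation by $r > 0$ pulls $\eta$ back to $r^2\eta$, a conformal rescaling; this preserves the timelike/null/spacelike trichotomy of tangent vectors, so causal curves map to causal curves under the dilation as well. Composing, $g$ induces a bijection between causal curves connecting $f_1(D^n)$ to $f_2(D^n)$ and those connecting $g(f_1(D^n))$ to $g(f_2(D^n))$, so the absence of such curves on the source side transfers to the image side, completing the verification.
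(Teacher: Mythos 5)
Your proposal is correct and follows essentially the same route as the paper: the paper reduces the lemma to the observation that a rectilinear map $\vec{x} \mapsto r\vec{x} + \vec{b}$ preserves the causal structure because translations lie in the Poincar\'e group and positive scaling is conformal, which is exactly your principal geometric step. The additional bookkeeping you supply (symmetry via Lemma \ref{lem:cdprop} and subset monotonicity for the stability-under-composition axiom) is left implicit in the paper but is the intended argument.
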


This lemma follows from a simple observation in Lorentzian geometry.

\begin{lemma}
Let $D \subset \RR^{1,n-1}$ be the unit disc with respect to the Euclidean metric on $\RR^n$.  Let $S,T \subseteq D$ be subsets which are causally disjoint with respect to the ambient causal structure.  For any rectilinear embedding $\varphi \colon D \hookrightarrow \RR^{1,n-1}$ the images $\varphi (S)$ and $\varphi (T)$ are causally disjoint subsets.
\end{lemma}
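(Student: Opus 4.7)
The plan is to observe that any rectilinear embedding $\varphi(\vec{x}) = r\vec{x} + \vec{b}$ decomposes as a composition of a translation by $\vec{b}$ and a positive homothety of ratio $r > 0$, and that both of these transformations preserve the collection of causal curves in $\RR^{1,n-1}$. The proof then runs by contrapositive: a causal curve between $\varphi(S)$ and $\varphi(T)$ would pull back under $\varphi^{-1}$ to a causal curve between $S$ and $T$.

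More concretely, first I would argue that translations preserve causality. If $\gamma \colon I \to \RR^{1,n-1}$ is a differentiable curve and $\gamma_{\vec{b}}(t) := \gamma(t) + \vec{b}$, then $\gamma_{\vec{b}}'(t) = \gamma'(t)$, so $\eta(\gamma_{\vec{b}}', \gamma_{\vec{b}}') = \eta(\gamma', \gamma')$; the sign (and in particular the inequality $\le 0$) is preserved. Next I would observe the analogous fact for positive homotheties: if $\gamma_r(t) := r \gamma(t)$ with $r > 0$, then $\gamma_r'(t) = r \gamma'(t)$ and $\eta(\gamma_r', \gamma_r') = r^2 \eta(\gamma', \gamma')$, which again preserves the sign. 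Composing, $\varphi$ and $\varphi^{-1}$ both send causal curves to causal curves.

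To finish, I would argue by contradiction: suppose $\varphi(S)$ and $\varphi(T)$ are not causally disjoint, so there exists a causal curve $\gamma \colon I \to \RR^{1,n-1}$ with $\gamma(t_0) \in \varphi(S)$ and $\gamma(t_1) \in \varphi(T)$ for some $t_0, t_1 \in I$. Setting $\tilde{\gamma} := \varphi^{-1} \circ \gamma$, the preceding computation shows $\tilde{\gamma}$ is still a causal curve in $\RR^{1,n-1}$; by construction $\tilde{\gamma}(t_0) \in S$ and $\tilde{\gamma}(t_1) \in T$, contradicting the assumed causal disjointness of $S$ and $T$.

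The only potential obstacle is a conceptual one rather than a technical one: one must confirm that the definition of causal disjointness is genuinely about causal curves in the ambient $\RR^{1,n-1}$ (not curves constrained to lie inside $D$), since $\tilde{\gamma}$ need not land in $D$. This is the reading of the definition given in the excerpt, so no issue arises. Everything else is a routine computation with the chain rule and the bilinearity of $\eta$.
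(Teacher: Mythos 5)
Your proposal is correct and follows essentially the same route as the paper: decompose the rectilinear map into a translation and a positive scaling, note that each preserves the causal character of curve tangent vectors (the paper phrases this as translations lying in the Poincar\'e group and scaling being conformal), and conclude that causal disjointness is preserved. Your version simply makes the derivative computation and the pullback-of-a-causal-curve argument explicit, which the paper leaves implicit.
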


\begin{proof}
By definition, a rectilinear map is of the form $\vec{x} \mapsto r \vec{x} +\vec{b}$. Translations are contained in the Poincar\'e group and scaling is conformal, so such a map preserves the causal structure. 
\end{proof}

Similarly, one defines an orthogonal category $\mathsf{Disc_n}$ exactly as $\mathsf{CD_n}$ except that $(f,g) \in \perp_{\mathsf{Disk_n}}$ if and only if $\mathop{\mathrm{int}} f(D^n)$ and $\mathop{\mathrm{int}} g(D^n)$ are (set theoretically) disjoint.  Note that there is an obvious forgetful functor $\omega \colon \mathsf{CD_n} \to \mathsf{Disc_{n-1}}$.  By functoriality, there is a map of colored operads for which we will abuse notation and also denote by $\omega \colon \cP_{\mathsf{CD_n}} \to \cP_{\mathsf{Disc_n}}$.  Finally, it is immediate that $\cP_{\mathsf{Disc_n}}$ is an $\mathbb{E}_\mathsf{n}$-operad, e.g., by comparing directly to the definition in \cite{F}.

%%%%%%%%%%%%%%%%%%%%%%%%%%%%%%%%%%%%%%%%%%%%
\subsection{An equivalence between $\cP_{\mathsf{CD_n}}$ and $\cP_{\mathsf{Disc_{n-1}}}$}

%\textcolor{red}{Insert pic!}

For $n, k \in \NN, n \ge 2$, consider the embedding 
\[
\varepsilon_{n,k} \colon \mathrm{Rect} ( \underbrace{D^{n-1} \amalg \dotsb \amalg D^{n-1}}_{k \text{ cofactors}}, D^{n-1}) \to \mathrm{Rect} ( \underbrace{D^{n} \amalg \dotsb \amalg D^{n}}_{k \text{ cofactors}}, D^n ),
\] 
where $\iota_{n,k}$ is defined as follows. For $\varphi \in \mathrm{Rect} ( D^{n-1} \amalg \dotsb \amalg D^{n-1}, D^{n-1}) $, let $\varphi_j \colon D^{n-1} \to D^{n-1}$ be the restriction to the $j$th cofactor.  Further, let $\iota \colon D^{n-1} \hookrightarrow D^{n} \subseteq \RR^{1,n-1}$ be the canonical inclusion in the $\{t=0\}$ hyperplane, and $s \colon D^{n-1} \hookrightarrow D^n$ the precomposition of $\iota$ with scalar multiplication by $\frac{1}{2}$. Next, let $\Sigma \left ( (s \circ \varphi_j)(D^{n-1}) \right )$ be the double cone (in the $t$-direction) with cone angle $\frac{\pi}{4}$, and $D^n_{\varphi_j}$ the maximal inscribed $n$-disc in $\Sigma \left ( (s \circ \varphi_j)(D^{n-1}) \right )$.  Finally, define the restriction of $\varepsilon_{n,k} (\varphi)$ to the $j$th cofactor to be the unique rectilinear map sending $D^n$ to $D^n_{\varphi_j}$.

The embedding $\varepsilon_{n,k}$ is a small twist on the standard embedding of little $(n-1)$-discs into $n$-discs to guarantee causal disjointness.

\begin{prop}
For all $n,k \in \NN$, the map $\varepsilon_{n,k}$ is a well-defined (topological) embedding.  Further, if $\varphi  \in \mathrm{Rect} ( D^{n-1} \amalg \dotsb \amalg D^{n-1}, D^{n-1})$ is such that the cofactors have disjoint images, then the images of cofactors under $\varepsilon_{n,k}(\varphi)$ are causally disjoint.
\end{prop}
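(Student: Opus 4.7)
My plan is to first make the construction of $\varepsilon_{n,k}(\varphi)$ completely explicit at the level of standard rectilinear parameters. Writing each cofactor of $\varphi$ as $\varphi_j(\vec x) = r_j \vec x + \vec b_j$, the disc $(s \circ \varphi_j)(D^{n-1})$ is the Euclidean $(n-1)$-disc of radius $r_j/2$ centered at $(\vec b_j/2, 0)$ in $\{t = 0\}$. Because the cone angle $\pi/4$ corresponds to null slope $1$, the double cone over it is
\[
\Sigma_j = \{(\vec x, t) \in \RR^{1,n-1} : \|\vec x - \vec b_j/2\| + |t| \le r_j/2\},
\]
and a distance-to-a-face calculation shows the maximal inscribed Euclidean $n$-ball is $D^n_{\varphi_j} = \{(\vec x, t) : \|\vec x - \vec b_j/2\|^2 + t^2 \le r_j^2/8\}$, a ball of radius $r_j/(2\sqrt{2})$ centered at $(\vec b_j/2, 0)$. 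Hence the restriction of $\varepsilon_{n,k}(\varphi)$ to the $j$th cofactor is $\vec y \mapsto (r_j/(2\sqrt 2))\vec y + (\vec b_j/2, 0)$.

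Given this explicit formula, well-definedness reduces to checking each rectilinear map lands inside $D^n$, which follows from $\|\vec b_j/2\| + r_j/(2\sqrt 2) \le (\|\vec b_j\| + r_j)/2 \le 1/2 < 1$. That $\varepsilon_{n,k}$ is a topological embedding is then transparent: in the standard parameterization of $\mathrm{Rect}(D^{n-1} \amalg \dotsb \amalg D^{n-1}, D^{n-1})$ by the tuple $(r_j, \vec b_j)_{j=1}^k$, our map is the continuous affine-linear injection $(r_j, \vec b_j) \mapsto (r_j/(2\sqrt 2), (\vec b_j/2, 0))$, and a continuous inverse is obtained by reading off the scales and translations from the image.

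The causal disjointness claim is the substantive content. Writing $\vec c_k = (\vec b_k/2, 0)$ and $R_k = r_k/2$, the hypothesis that $\varphi_i(D^{n-1})$ and $\varphi_j(D^{n-1})$ are disjoint gives $\|\vec c_i - \vec c_j\| > R_i + R_j$. Take any $p = (\vec x_i, t_i) \in D^n_{\varphi_i}$ and $q = (\vec x_j, t_j) \in D^n_{\varphi_j}$; then $\|\vec x_k - \vec c_k\| \le \sqrt{R_k^2/2 - t_k^2}$ with $|t_k| \le R_k/\sqrt 2$. If there were a causal curve from $p$ to $q$, then $|t_j - t_i| \ge \|\vec x_i - \vec x_j\|$, and the triangle inequality combined with the elementary bound $\sqrt{R^2/2 - s^2} \pm s \le R$ (valid for $|s| \le R/\sqrt 2$, with maximum $R$ attained at $s = \pm R/2$) would give
\[
\|\vec c_i - \vec c_j\| \le \sqrt{R_i^2/2 - t_i^2} + |t_j - t_i| + \sqrt{R_j^2/2 - t_j^2} \le R_i + R_j,
\]
contradicting the strict disjointness hypothesis.

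The main (and really only) technical point is the elementary one-variable inequality invoked in the final display; everything else is bookkeeping, essentially an affine change of variables. It is worth flagging that the strictness of the disjointness of $\varphi_i(D^{n-1})$ and $\varphi_j(D^{n-1})$ is essential — the bound above is sharp, and tangent $(n-1)$-discs would produce inscribed $n$-balls connected by a null geodesic at the extremal configuration $t_i = -R_i/2$, $t_j = R_j/2$, with $\vec x_i, \vec x_j$ on the line through $\vec c_i, \vec c_j$.
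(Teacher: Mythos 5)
Your proof is correct and is, in substance, the paper's own proof: the paper simply asserts that the first claim follows from the universal property of coproducts together with the definition of rectilinear embeddings, and that the second is ``an elementary exercise in the geometry of $n$-dimensional Minkowski space,'' and you have carried out that exercise explicitly, with the correct identification of $D^n_{\varphi_j}$ as the ball of radius $r_j/(2\sqrt{2})$ about $(\vec{b}_j/2,0)$ and the correct extremal inequality $\sqrt{R^2/2 - s^2} + |s| \le R$. Your closing observation that the bound is sharp at tangency is a genuine catch worth recording: since $\perp_{\mathsf{Disc_{n-1}}}$ only requires disjoint \emph{interiors}, tangent $(n-1)$-discs are permitted there but their images under $\varepsilon_{n,k}$ contain null-related boundary points, so the hypothesis ``disjoint images'' cannot be relaxed to ``disjoint interiors'' without shrinking the inscribed balls slightly --- a point that matters when $\varepsilon_{n,k}$ is later applied to all of $\cP_{\mathsf{Disc_{n-1}}}(k)$.
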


\begin{proof}
The first claim follows from the universal property of coproducts and the definition of rectilinear embeddings.  The second claim is an elementary exercise in the geometry of $n$-dimensional Minkowski space.
\end{proof}

\begin{theorem}\label{thm}
For $n \ge 2$, the operads $\cP_{\mathsf{CD_n}}$ and $\cP_{\mathsf{Disc_{n-1}}}$ are homotopy equivalent.
\end{theorem}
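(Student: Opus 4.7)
The strategy is to prove $\varepsilon_{n,\bullet}$ is an arity-wise weak equivalence by identifying each of $\cP_{\mathsf{CD_n}}(k)$ and $\cP_{\mathsf{Disc_{n-1}}}(k)$ with the configuration space $\mathrm{Conf}_k(\RR^{n-1})$, and then upgrading arity-wise to operadic.

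First I would construct a deformation retraction $H$ of $\cP_{\mathsf{CD_n}}(k)$ onto the subspace $\cP_{\mathsf{CD_n}}^0(k)$ of causally disjoint configurations whose centers all lie in the spacelike hyperplane $\{t=0\}$. For $f_i(\vec{y}) = r_i \vec{y} + (t_i, \vec{x}_i)$, set $H_s(f_i)(\vec{y}) = r_i \vec{y} + (s t_i, \vec{x}_i)$. That $H_s(f_i)(D^n) \subseteq D^n$ follows from $\sqrt{s^2 t_i^2 + |\vec{x}_i|^2} + r_i \le \sqrt{t_i^2 + |\vec{x}_i|^2} + r_i \le 1$. The key geometric input is that $H_s$ preserves pairwise causal disjointness: a pair of Euclidean $n$-balls $B(c_i, r_i), B(c_j, r_j)$ is causally disjoint iff the closed Euclidean ball of radius $r_i + r_j$ around $c_i - c_j = (\tau, \vec{y}) \in \RR \times \RR^{n-1}$ sits inside the spacelike region $\{(t,\vec{x}) : |t| < |\vec{x}|\}$, equivalently the Euclidean distance from $(\tau, \vec{y})$ to the causal cone $\{|t| \ge |\vec{x}|\}$ is at least $r_i + r_j$. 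A short computation gives this distance as $(|\vec{y}| - |\tau|)/\sqrt{2}$ (the causal boundary consists of two $45^\circ$ hyperplanes). Since $|\vec{y}| - s|\tau| \ge |\vec{y}| - |\tau|$ for $s \in [0, 1]$, the distance only grows under $H_s$, so causal disjointness persists.

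Next I would identify the target. The subspace $\cP_{\mathsf{CD_n}}^0(k)$ is parametrized by tuples $(\vec{x}_i, r_i)$ with $|\vec{x}_i| + r_i \le 1$ and $|\vec{x}_i - \vec{x}_j| \ge \sqrt{2}(r_i + r_j)$; $\cP_{\mathsf{Disc_{n-1}}}(k)$ is parametrized analogously but with $\sqrt{2}$ replaced by $1$. Linearly shrinking the radii to zero yields a further deformation retraction of each onto $\mathrm{Conf}_k(\mathrm{int}\, D^{n-1}) \simeq \mathrm{Conf}_k(\RR^{n-1})$. By its explicit construction, $\varepsilon_{n,k}$ sends a configuration with centers $\vec{x}_j$ and radii $r_j$ to one with centers $(0, \vec{x}_j/2)$ and radii $r_j/(2\sqrt{2})$; on configuration-space centers this is the rescaling $\vec{x}_j \mapsto \vec{x}_j/2$, isotopic to the identity. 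Hence $\varepsilon_{n,k}$ is a weak equivalence at each arity.

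The main obstacle is promoting this arity-wise equivalence to a weak equivalence of topological operads, since the constants $1/2$ and $1/(2\sqrt{2})$ in $\varepsilon_{n,\bullet}$ prevent strict compatibility with operadic composition. I plan to handle this by invoking the recognition principle for $\mathbb{E}_{n-1}$-operads: both $\cP_{\mathsf{Disc_{n-1}}}$ and $\cP_{\mathsf{CD_n}}$ have free $\Sigma_k$-actions on each arity (rectilinear embeddings are determined by their images), and the preceding analysis identifies both arity spaces with $\mathrm{Conf}_k(\RR^{n-1})$. Consequently both are $\mathbb{E}_{n-1}$-operads, and hence weakly equivalent as operads. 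A more hands-on alternative would be to factor $\varepsilon$ through an explicit cofibrant $\mathbb{E}_{n-1}$-model via a Boardman--Vogt-style zigzag.
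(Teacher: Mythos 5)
Your arity-wise analysis is essentially the paper's argument, carried out with more quantitative precision. Your homotopy $H_s$, which scales the $t$-coordinates of the centers toward $0$, is exactly the paper's ``slide to the $\{t=0\}$ plane'' map $\widetilde{R_{n,k}}$, and your computation that the Euclidean distance from $c_i-c_j=(\tau,\vec{y})$ to the solid light cone is $(|\vec{y}|-|\tau|)/\sqrt{2}$, hence increases as $|\tau|$ decreases, is precisely the content of the paper's $\delta$-shift lemma. Your explicit criterion $|\vec{x}_i-\vec{x}_j|\ge\sqrt{2}(r_i+r_j)$ in the $t=0$ slice is a useful sharpening that the paper leaves implicit (it explains the shrinking factor built into $\varepsilon_{n,k}$). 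You then finish the arity-wise comparison by contracting both sides onto $\mathrm{Conf}_k(\RR^{n-1})$, whereas the paper performs a second ``slide and rescale'' homotopy to retract $\widetilde{R_{n,k}}(-,1)\bigl(\cP_{\mathsf{CD_n}}(k)\bigr)$ onto $\varepsilon_{n,k}\bigl(\cP_{\mathsf{Disc_{n-1}}}(k)\bigr)$; either route gives the same conclusion.

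The gap is in your operadic upgrade. You are right to worry that the constants $1/2$ and $1/(2\sqrt{2})$ prevent $\varepsilon_{n,\bullet}$ from strictly intertwining composition (indeed $\varepsilon(\varphi\circ\psi)\neq\varepsilon(\varphi)\circ\varepsilon(\psi)$, a point the paper's ``simple check that the $\varepsilon_{n,k}$ and $R_{n,k}$ assemble to maps of operads'' passes over quickly). But the ``recognition principle'' you invoke to repair this is not a theorem: a topological operad whose $k$-th space carries a free $\Sigma_k$-action and is weakly equivalent to $\mathrm{Conf}_k(\RR^{n-1})$ need not be an $\mathbb{E}_{n-1}$-operad --- the homotopy type of the arity spaces does not determine the homotopy type of the operad (such a space-level recognition is available for $\mathbb{E}_\infty$, where the arity spaces are contractible, but not for finite $n$). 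To close the argument you need an actual zigzag of operad maps that are arity-wise equivalences. The cleanest repair, already latent in your computation, is to interpose the suboperad $\cP^0_{\mathsf{CD_n}}\subseteq\cP_{\mathsf{CD_n}}$ of embeddings whose image discs are centered on $\{t=0\}$: it is closed under composition, your $H_s$ exhibits the inclusion $\cP^0_{\mathsf{CD_n}}\hookrightarrow\cP_{\mathsf{CD_n}}$ as an arity-wise deformation retract, and intersecting with the hyperplane $\{t=0\}$ gives a strict operad map $\cP^0_{\mathsf{CD_n}}\to\cP_{\mathsf{Disc_{n-1}}}$ (centers and radii are preserved, and $|\vec{x}_i-\vec{x}_j|\ge\sqrt{2}(r_i+r_j)$ implies disjointness) which your radius-shrinking homotopies show is an arity-wise equivalence. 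That zigzag proves the theorem without appeal to any recognition principle; your Boardman--Vogt alternative would also work but is considerably heavier.
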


%The theorem also holds in the degenerate case $n=1$, as the space of 1-ary operations for $\cP_{\mathsf{CD_1}}$ is contractible, while the space of 1-ary operations for $\cP_{\mathsf{Disc_{0}}}$ is a point, while the higher arity spaces are empty.

The theorem follows from a simple check that the embeddings $\varepsilon_{n,k}$ and the retractions $R_{n,k}$ defined below assemble to maps of (symmetric) operads.  To begin, we note the following which follows from a straightforward argument in (Euclidean) geometry.

\begin{lemma}
Let $D_1:= D^n_{r_1} (c_1, c_2, \dotsc ,c_n)$ and $D_0:=D^n_{r_2}(0,0, \dotsc, 0)$ be Euclidean discs of radius $r_1$ and $r_2$ centered at $\vec{c}, \vec{0} \in \RR^{1,n-1}$, with $c_1 \ge 0$. Let $\delta \in [0, c_1)$. If $D_1$ and $D_0$ are causally disjoint, then the $\delta$-shift in the $t$-direction of $D_1$ and $D_0$ are causally disjoint, where the $\delta$-shift is given by $D_{1,\delta}:=D^n_{r_1} (c_1-\delta, c_2, \dotsc ,c_n)$.
\end{lemma}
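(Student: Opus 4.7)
The plan is to first characterize the causal disjointness of two Euclidean discs as a single sharp inequality on their centers and radii, and then observe that the permitted $\delta$-shift only strengthens that inequality. Recall that two subsets $A, B \subseteq \RR^{1,n-1}$ are causally disjoint iff every pair $(\vec{p},\vec{q}) \in A \times B$ is strictly spacelike, i.e., $|\vec{p}_s - \vec{q}_s| > |p_t - q_t|$, where the subscripts $s$ and $t$ denote the spatial and temporal components; equivalently, the Minkowski difference $A - B$ lies in the open spacelike region $\{\vec{w} : |\vec{w}_s| > |w_t|\}$. When $A = D_1$ and $B = D_0$ as in the statement, this difference is itself a Euclidean disc of radius $R := r_1 + r_2$ centered at $\vec{c} = (c_1, c_2, \dotsc, c_n)$.

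Next I would pin down precisely when such a disc lies in the open spacelike region. By rotational invariance in the spatial directions the question reduces to the $(t,r)$-plane, with $r := |\vec{c}_s| = \sqrt{c_2^2 + \dotsb + c_n^2}$, where the light cone becomes the two lines $r = \pm t$. A short constrained optimization---the extremal perturbation of $\vec{c}$ within the radius-$R$ disc is at $45^\circ$ toward the cone, and the bound $h_1 + h \le R\sqrt{2}$ for $h_1^2 + h^2 \le R^2$ is Cauchy--Schwarz---yields the clean criterion
\[
D_1 \text{ and } D_0 \text{ are causally disjoint} \iff |\vec{c}_s| - |c_1| > R\sqrt{2}.
\]

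With this criterion in hand the lemma is immediate. The $\delta$-shift replaces $\vec{c}$ by $\vec{c}' := (c_1 - \delta, c_2, \dotsc, c_n)$, so $|\vec{c}'_s| = |\vec{c}_s|$ is unchanged while $|c_1 - \delta| = c_1 - \delta \le c_1 = |c_1|$ by the hypothesis $\delta \in [0, c_1)$. Hence $|\vec{c}'_s| - |c_1 - \delta| \ge |\vec{c}_s| - |c_1| > R\sqrt{2}$, and $D_{1,\delta}$ and $D_0$ remain causally disjoint. The only step with any substance is the derivation of the sharp inequality, a two-variable optimization in a single plane; I do not anticipate any obstacle beyond routine bookkeeping. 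A pointwise approach---trying to compare $|p_t - q_t|$ with $|p_t - \delta - q_t|$ for each pair---does \emph{not} work, since individual time distances can increase under the shift; it is crucial to use the global inequality on the centers.
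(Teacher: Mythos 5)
Your proof is correct and follows the same route the paper gestures at: the paper offers only the remark that the lemma reduces to the relative position of the two centers and the fact that decreasing their $t$-separation preserves causal disjointness, and your explicit criterion $|\vec{c}_s| - |c_1| > \sqrt{2}(r_1+r_2)$ (obtained via the Minkowski difference of the discs) is exactly the precise form of that observation. Your closing caveat that a pairwise comparison of time separations fails is a worthwhile addition, since it explains why the reduction to centers is genuinely needed.
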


We stated the lemma with particular hypotheses about the centers of the discs for convenience, but as (ambient) translations preserve causal structure, the lemma is just a statement about the relative positioning of the centers of discs and decreasing the $t$-distance between these centers preserving causal disjointness.

\begin{lemma}
For $n, k \in \NN, n \ge 2$, the space $\cP_{\mathsf{CD_n}} (k)$ deformation retracts onto the space $\varepsilon_{n,k} \left ( \cP_{\mathsf{Disc_{n-1}}} (k) \right )$.
\end{lemma}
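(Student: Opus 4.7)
The plan is a two-stage deformation retraction: first flatten each disc's time coordinate onto the hyperplane $\{t=0\}$, then uniformly rescale the flattened configuration so that it lands in the image of $\varepsilon_{n,k}$.

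For Stage~I, writing $f_j(\vec{x}) = r_j \vec{x} + (t_j, \vec{c}_j)$, I would take
\[
H^{\mathrm{I}}_s(\vec{f})_j(\vec{x}) := r_j \vec{x} + ((1-s) t_j, \vec{c}_j), \qquad s \in [0,1].
\]
Along this homotopy the pairwise time-distance $|(1-s)(t_i-t_j)|$ is non-increasing in $s$, so pairwise causal disjointness is preserved by the preceding lemma (the endpoint $s=1$ is handled by the openness of strict causal disjointness). Containment in $D^n$ is preserved because $\sqrt{((1-s)t_j)^2 + |\vec{c}_j|^2} + r_j \le \sqrt{t_j^2 + |\vec{c}_j|^2} + r_j \le 1$, and configurations already having every $t_j = 0$ are pointwise fixed. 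A direct Minkowski calculation (maximize $\xi_2 - \xi_1 + \sqrt{r_1^2 - \xi_1^2} + \sqrt{r_2^2 - \xi_2^2}$ over pairs of points in the two discs) shows that at $s=1$ the pairwise condition collapses to $|\vec{c}_i - \vec{c}_j| > \sqrt{2}(r_i + r_j)$.

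For Stage~II, I identify the image $\varepsilon_{n,k}(\cP_{\mathsf{Disc_{n-1}}}(k))$ with the subspace of tuples $f_j(\vec{x}) = \rho_j \vec{x} + (0, \vec{a}_j)$ satisfying $|\vec{a}_j| + \sqrt{2}\,\rho_j \le 1/2$; this is precisely the condition that the would-be preimage $\varphi_j(\vec{y}) = (2\sqrt{2}\,\rho_j)\vec{y} + 2\vec{a}_j$ is a valid rectilinear self-embedding of $D^{n-1}$, pairwise $(n-1)$-disc disjointness following automatically from the Stage~I endpoint inequality. Define the continuous scalar
\[
\kappa^*(\vec{f}) := \min\!\left(1, \ \frac{1}{2\max_j (|\vec{c}_j| + \sqrt{2}\, r_j)}\right),
\]
which equals $1$ exactly on the $\varepsilon_{n,k}$ image, and set
\[
H^{\mathrm{II}}_s(\vec{f})_j(\vec{x}) := \kappa(s)\, r_j \vec{x} + (0, \kappa(s)\, \vec{c}_j), \qquad \kappa(s) := 1 + s(\kappa^*(\vec{f}) - 1).
\]
A uniform rescaling at $t=0$ leaves the causal disjointness inequality invariant (both sides scale by $\kappa(s)>0$), preserves the containment in $D^n$ since $\kappa(s)\le 1$, and is stationary on the image since $\kappa^*=1$ there. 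At $s=1$ the resulting configuration lies in the image by construction.

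Concatenating $H^{\mathrm{I}}$ and $H^{\mathrm{II}}$ yields the desired strong deformation retraction. The main (mild) obstacle I anticipate is arranging the scaling factor so that it is simultaneously continuous, stationary on the image, and sufficient to land in the image; the $\min(1,\cdot)$ prescription accomplishes all three at once. Everything else reduces to the explicit Minkowski bookkeeping of disc radii against causal separation that is already implicit in the preceding lemma.
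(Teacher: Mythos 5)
Your proof is correct and follows the same overall two-stage strategy as the paper: Stage I is exactly the paper's ``slide to the $\{t=0\}$ plane'' homotopy, justified by the same $\delta$-shift lemma, and the second stage corrects for the fact that $\varepsilon_{n,k}$ shrinks the discs. Where you genuinely differ is in how that correction is implemented: the paper indexes the discs from left to right and concatenates a sequence of individual ``slide and rescale'' homotopies, one disc at a time, whereas you apply a single global rescaling governed by the explicit scalar $\kappa^*(\vec f)=\min\bigl(1,\,1/(2\max_j(|\vec c_j|+\sqrt{2}\,r_j))\bigr)$. Your version buys two things: the scaling factor is manifestly continuous in the configuration and equals $1$ precisely on the image of $\varepsilon_{n,k}$, so stationarity on the subspace and continuity of the concatenated retraction are immediate; the paper's left-to-right ordering, by contrast, is not itself a continuous function of the configuration, so its concatenation requires more care to make rigorous. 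Your identification of $\varepsilon_{n,k}\bigl(\cP_{\mathsf{Disc_{n-1}}}(k)\bigr)$ with the tuples at $t=0$ satisfying $|\vec a_j|+\sqrt{2}\,\rho_j\le 1/2$ and $|\vec a_i-\vec a_j|\ge\sqrt{2}(\rho_i+\rho_j)$ matches the paper's double-cone construction, since the maximal ball inscribed in the double cone of angle $\pi/4$ over a disc of radius $R$ has radius $R/\sqrt{2}$. One small quibble: ``openness of strict causal disjointness'' does not by itself dispose of the endpoint $s=1$, since a limit of causally disjoint configurations need not be causally disjoint; but your direct maximization, which shows that causal disjointness at any $s<1$ already forces $|\vec c_i-\vec c_j|>\sqrt{2}(r_i+r_j)$, is the right argument and closes that gap.
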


\begin{proof}
For given $n$ and $k$ we will define a map
\[
R_{n,k} \colon \cP_{\mathsf{CD_n}} (k) \times [0,1]  \to \cP_{\mathsf{CD_n}} (k).
\]
To begin, let 
\[
\widetilde{R_{n,k}} \colon \cP_{\mathsf{CD_n}} (k) \times [0,1]  \to \cP_{\mathsf{CD_n}} (k)
\]
be the ``slide to the $\{t=0\}$ plane map." That is, for $\psi \in \cP_{\mathsf{CD_n}} (k) \subset  \mathrm{Rect} ( \underbrace{D^{n} \amalg \dotsb \amalg D^{n}}_{k \text{ cofactors}}, D^n )$, $\widetilde{R_{n,k}} (\psi)$ is the path which moves the image of each disc to the $\{t=0\}$ plane along the line orthogonal to the plane from the center of the disc with speed parametrized by arc length. That this map preserves causal disjointness follows from repeated application of the preceding lemma.

The map $\widetilde{R_{n,k}}$ is nearly a (strong) deformation retract onto $\varepsilon_{n,k} \left ( \cP_{\mathsf{Disc_{n-1}}} (k) \right )$, as it fixes this subspace pointwise.  The remaining issue is that we shrank the $(n-1)$-discs in defining the embedding $\varepsilon_{n,k}$ in order to ensure causal disjointness (the shrinking factor was convenient, though far from optimal), so it is not the case that $\widetilde{R_{n,k}} (-,1) \left ( \cP_{\mathsf{CD_n}} (k) \right ) \subseteq \varepsilon_{n,k} \left ( \cP_{\mathsf{Disc_{n-1}}} (k) \right )$.  Nonetheless, these two images are homotopy equivalent.  Indeed, indexing the discs from left to right there is a homotopy equivalence (in the space of causally disjoint rectilinear embeddings) which takes the image of the left most disc under $\widetilde{R_{n,k}}$ to the image of the left most disc under $\varepsilon_{n,k}$ while fixing the others; the homotopy is just ``slide and rescale."  Proceeding from left to right and concatenating these maps defines a (strong) deformation rectraction, $H$, from $\widetilde{R_{n,k}} (-,1) \left ( \cP_{\mathsf{CD_n}} (k) \right )$ onto $\varepsilon_{n,k} \left ( \cP_{\mathsf{Disc_{n-1}}} (k) \right )$.  Hence, our desired map is the concatenation $R_{n,k} := \widetilde{R_{n,k}} \ast H$.
\end{proof}

Since $\cP_{\mathsf{Disc_{n-1}}}$ is an $\mathbb{E}_{\mathsf{n-1}}$ operad, the following are straightforward verifications.

\begin{cor}
\mbox{}
\begin{itemize}
\item[(a)] For $n \ge 2$ and any symmetric monoidal category $\cD$ (enriched and tensored over $\mathsf{Top}$),  the map of operads $\varepsilon \colon \cP_{\mathsf{Disc_{n-1}}} \hookrightarrow \cP_{\mathsf{CD_n}}$ induces an equivalence of algebras
\[
\varepsilon^\ast \colon \mathsf{Alg}_{\cP_{\mathsf{CD_n}}} (\cD) \xrightarrow{ \; \simeq \;} \mathsf{Alg}_{\mathbb{E}_{\mathsf{n-1}}} (\cD).
\]
\item[(b)] Similarly, the forgetful map $\omega \colon \cP_{\mathsf{CD_n}} \to \cP_{\mathsf{Disc_n}}$ induces a map of algebras
\[
\omega^\ast \colon  \mathsf{Alg}_{\mathbb{E}_{\mathsf{n}}} (\cD) \to \mathsf{Alg}_{\cP_{\mathsf{CD_n}}} (\cD).
\]
\item[(c)] The composition  $\varepsilon^\ast \circ \omega^\ast \colon \mathsf{Alg}_{\mathbb{E}_{\mathsf{n}}} (\cD) \to \mathsf{Alg}_{\mathbb{E}_{\mathsf{n-1}}} (\cD)$ is equivalent to the map induced by the standard embedding $\mathsf{Disc_{n-1}} \hookrightarrow \mathsf{Disc_n}$.
\end{itemize}
\end{cor}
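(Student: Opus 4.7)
The plan addresses parts (a) and (b) as formal consequences of the preceding material, reserving the real work for part (c).

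For (a), the lemma preceding this corollary shows that $\varepsilon$ is a $\Sigma_k$-equivariant weak equivalence on each arity space, and Theorem \ref{thm} records that the $\varepsilon_{n,k}$ assemble into a weak equivalence of topological operads $\cP_{\mathsf{Disc_{n-1}}} \to \cP_{\mathsf{CD_n}}$. Standard homotopy theory of colored topological operads then ensures that pullback along such a weak equivalence induces an equivalence of algebra categories in any symmetric monoidal $\cD$ tensored over $\mathsf{Top}$. Part (b) is immediate from the preceding corollary on functoriality of prefactorization, applied to the identity-on-objects morphism of orthogonal categories $(\mathsf{CD_n}, \perp_{\mathsf{CD_n}}) \to (\mathsf{Disc_n}, \perp_{\mathsf{Disc_n}})$ that sends $\perp_{\mathsf{CD_n}}$ into the weaker $\perp_{\mathsf{Disc_n}}$; this produces the operad map $\omega$, and the induced pullback on algebras is $\omega^\ast$.

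For part (c), the goal is to show $\varepsilon^\ast \circ \omega^\ast \simeq \iota_{\mathrm{std}}^\ast$, where $\iota_{\mathrm{std}} \colon \cP_{\mathsf{Disc_{n-1}}} \hookrightarrow \cP_{\mathsf{Disc_n}}$ is the standard embedding. Since algebra pullback is contravariantly functorial, it suffices to show that the composite $\omega \circ \varepsilon \colon \cP_{\mathsf{Disc_{n-1}}} \to \cP_{\mathsf{Disc_n}}$ is equivalent to $\iota_{\mathrm{std}}$ as a map of topological operads. I would construct an explicit arity-wise homotopy: a configuration $\varphi = (\varphi_j)_{j=1}^k$ with $\varphi_j(\vec{x}) = r_j \vec{x} + \vec{b}_j$ is sent by $\omega \circ \varepsilon$ to an $n$-disc configuration with centers $(\vec{b}_j/2, 0)$ and radii $c \cdot r_j$ (where $c<1$ is the geometric constant arising from the maximal inscribed $n$-disc in a double cone of angle $\pi/4$), whereas $\iota_{\mathrm{std}}$ yields centers $(\vec{b}_j, 0)$ and radii $r_j$. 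The proposed homotopy linearly interpolates both the scale factor and the translation factor. A short calculation, using only that the source $(n-1)$-discs are disjoint, shows that the separation of centers grows at least as fast as the sum of radii along the homotopy, so the intermediate $n$-disc configurations remain pairwise interior-disjoint and land in $\cP_{\mathsf{Disc_n}}(k)$.

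The main obstacle is that a linear interpolation of scale factors does not strictly respect operadic composition, since composing rectilinear maps multiplies scales. I would resolve this by passing to the homotopy category of topological operads (equivalently, the $\infty$-category of $\infty$-operads), where such an interpolation is still recognized as a homotopy of operad morphisms: the failure of strict compatibility lies in a contractible family of reparametrizations. Alternatively, one can invoke the rigidity of $\mathbb{E}$-type operads, namely that maps of little discs operads are determined up to homotopy by the induced action on $\pi_0$ of each arity space; since $\omega \circ \varepsilon$ and $\iota_{\mathrm{std}}$ agree on $\pi_0$ in each arity by construction (both send any disjoint configuration to a disjoint configuration while preserving labeling), they represent the same homotopy class. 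Either route yields that the pullback functors on $\cD$-valued algebras are naturally equivalent, establishing (c).
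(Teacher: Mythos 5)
The paper offers no actual proof here---it declares all three items ``straightforward verifications''---so your proposal is best read as supplying the details the author omits. Parts (a) and (b) track the paper's implicit reasoning exactly: (a) is the standard invariance of algebra categories under arity-wise weak equivalences of topological operads, using the deformation-retraction lemma and Theorem \ref{thm}, and (b) is precisely the functoriality corollary applied to the identity-on-objects orthogonal functor $\mathsf{CD_n}\to\mathsf{Disc_n}$ (which is legitimate because causal disjointness implies set-theoretic disjointness: constant curves are causal). For (c) your explicit interpolation is geometrically sound: with $c=\tfrac{1}{2\sqrt{2}}<\tfrac12$ the scale factor grows no faster than the center separation along the linear path, so intermediate configurations stay interior-disjoint and inside $D^n$. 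This is a genuine contribution beyond what the paper records.

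Two caveats. First, the strictness problem you flag for the interpolation is not created by the homotopy---it is already present in the paper's $\varepsilon$ itself: a strict map of one-object operads of rectilinear self-embeddings must preserve the dilation factor (since composition multiplies dilations), and $\varepsilon_{n,1}$ rescales by $\tfrac{1}{2\sqrt2}$. So whatever relaxed notion of ``map of operads'' makes Theorem \ref{thm} and part (a) work (e.g., a zig-zag through an equivalent operad, or a map of the associated $\infty$-operads) is the same device needed for your homotopy; your first resolution is therefore the right one, though it should be stated as a property of the specific construction rather than waved at as ``a contractible family of reparametrizations.'' Second, your fallback ``rigidity'' argument is not valid as stated: for $n\ge 2$ every arity space of $\cP_{\mathsf{Disc_n}}$ is connected, so \emph{any} two operad maps $\mathbb{E}_{\mathsf{n-1}}\to\mathbb{E}_{\mathsf{n}}$ agree on $\pi_0$ of arity spaces, and there is no theorem asserting that maps of little-discs operads are determined up to homotopy by this data (compare the reflection action on $\mathbb{E}_{\mathsf 2}$, which is trivial on $\pi_0$ of each arity space but not homotopic to the identity). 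Drop that route and rely on the explicit homotopy.
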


%%%%%%%%%%%%%%%%
\subsection{Disjoint Causal Diamonds}\label{sect:cd}

In AQFT, more generally Lorentzian geometry, \emph{causal convexity} is often required.  The basic open of this type are the open (causal) diamonds of the form $I^+(p) \cap I^-(q)$, where $p,q$ are points and $I^\pm$ denotes the chronological future/past; similarly the closed diamonds, which are indeed the closure of the open diamonds, are of the form $J^+ (p) \cap J^-(q)$, the intersection of causal futures/pasts.  As we will see shortly, these (causal) diamonds can also be used to define an operad.  Moreover, the results of the previous section will apply these operads as well.

For clarity we will restrict to Minkowski space $\RR^{1,n-1}$, though the relevant notions make sense more generally for time oriented Lorentzian manifolds.  Recall, that a subset $U \subseteq \RR^{1,n-1}$ is \emph{causally convex} if every causal curve which begins and ends in $U$ is entirely contained in $U$.  Equivalently, $U$ contains all causal diamonds of its points.  Note that the (closed) Euclidean unit disc $D^2 \subset \RR^{1,1}$ is not causally convex as it does not contain the diamond $\Delta = J^-(\frac{\sqrt{2}}{2}, \frac{\sqrt{2}}{2}) \cap J^+ (\frac{\sqrt{2}}{2}, -\frac{\sqrt{2}}{2})$. (The same is true for the open disc, even using open diamonds, see Figure \ref{fig:3}.)

\begin{wrapfigure}{R}{0.35\textwidth}
\centering
\begin{tikzpicture}
   \draw[gray!60,fill=gray!20] (0,0) circle (1cm);
    \fill[pattern=crosshatch, pattern color = gray!75]plot coordinates{(0,-1.44)(-1.44,0)(0,1.44)(1.44,0)(0,-1.44)}; 
    \fill[gray!20] plot coordinates{(-1,0)(0,1)(1,0)(0,-1)(-1,0)};
    \draw node at (0,0) {$\Diamond$};
\end{tikzpicture}
\caption{The unit disc, its causal envelope, and the standard causal diamond $\Diamond$.}\label{fig:3}
\end{wrapfigure}

Let $\Diamond^n = I^-(1,0, \dotsc, 0) \cap I^+(-1,0, \dotsc, 0)$ be our standard causal/chronological diamond in $\RR^{1,n-1}$. (I will work with open diamonds, but one could similarly work with their closures if desired.)

\begin{definition}
Let $n \in \NN$.  Define \emph{the orthogonal category of causally disjoint diamonds} as follows.  The category $\mathsf{CDiam_n}$ has one object, $\mathrm{Hom} (\ast, \ast) = \mathrm{Rect}(\Diamond^n, \Diamond^n)$, and $(f,g) \in \perp_{\mathsf{CDiam_n}}$ if and only if $f(\Diamond^n)$ and $g(\Diamond^n)$ are causally disjoint (in $\RR^{1, n-1}$). Further, define \emph{the operad of little causal diamonds} by $\cP_{\mathsf{CDiam_n}} := \cP^\perp_{\mathsf{CDiam_n}}$.
\end{definition}

The deformation retractions of the previous section can be modified in a straightforward way to prove the following.

\begin{prop}
For $n \ge 2$, the operads $\cP_{\mathsf{CD_n}}$, $\cP_{\mathsf{CDiam_n}}$, and $\cP_{\mathsf{Disc_{n-1}}}$ are all homotopy equivalent.
\end{prop}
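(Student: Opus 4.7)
The plan is to mirror the proof of Theorem~\ref{thm} in the diamond setting: since that result already gives $\cP_{\mathsf{CD_n}} \simeq \cP_{\mathsf{Disc_{n-1}}}$, it suffices to establish an analogous equivalence $\cP_{\mathsf{CDiam_n}} \simeq \cP_{\mathsf{Disc_{n-1}}}$ and then chain the two.

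First I would introduce an embedding $\varepsilon'_{n,k} \colon \cP_{\mathsf{Disc_{n-1}}}(k) \to \cP_{\mathsf{CDiam_n}}(k)$. For a rectilinear map from $k$ interior-disjoint $(n-1)$-discs into $D^{n-1}$ whose $j$-th cofactor has radius $r_j$ and center $\vec{x}_j$, send it to the multi-embedding whose $j$-th cofactor is $\vec{z} \mapsto r_j \vec{z} + (0, \vec{x}_j)$; in other words, erect over each equator disc its symmetric causal diamond. Well-definedness comes down to two elementary computations in Minkowski space. First, applying the triangle inequality to the norm $|s|+\|\vec{y}\|$ on $\RR^{1,n-1}$ that cuts out $\Diamond^n$ shows each image diamond fits inside $\Diamond^n$ precisely because $r_j + \|\vec{x}_j\| \le 1$. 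Second, for points $A$, $B$ in two equator-centered open diamonds of radii $r_1, r_2$ and spatial centers $\vec{x}_1, \vec{x}_2$, estimating $|s_A - s_B|$ against $\|\vec{y}_A - \vec{y}_B\|$ via the triangle inequality gives that the two diamonds are causally disjoint if and only if $\|\vec{x}_1 - \vec{x}_2\| \ge r_1 + r_2$, which is precisely when the corresponding equator discs have disjoint interiors. Functoriality of $\varepsilon'_{n,k}$ with respect to operadic composition and the symmetric group action is then immediate.

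Next I would construct the retraction in parallel with $\widetilde{R_{n,k}}$, defining $R'_{n,k} \colon \cP_{\mathsf{CDiam_n}}(k) \times [0,1] \to \cP_{\mathsf{CDiam_n}}(k)$ to slide each diamond's center along the $t$-axis onto the hyperplane $\{t=0\}$ while keeping radii fixed; containment in $\Diamond^n$ is preserved throughout because $r_i + |t_i| + \|\vec{x}_i\|$ only decreases. The same triangle-inequality bookkeeping yields the diamond analog of the disc shift-lemma: decreasing $|t_1 - t_2|$ while keeping spatial centers and radii fixed preserves causal disjointness of two diamonds. Crucially, unlike the disc case, no auxiliary reshape homotopy is needed---once all centers lie in $\{t=0\}$, the configuration is tautologically in the image of $\varepsilon'_{n,k}$, because the causal-disjointness condition for equator-centered diamonds coincides exactly with interior-disjointness of equator discs. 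Thus $R'_{n,k}$ is a strong deformation retraction onto $\varepsilon'_{n,k}(\cP_{\mathsf{Disc_{n-1}}}(k))$, and combining this with Theorem~\ref{thm} yields the required three-way equivalence. I expect the main technical point to be pinning down the Lorentzian inequality characterizing causally disjoint diamonds uniformly enough that the slide stays in the operad at every intermediate parameter; once that inequality is in hand, the remaining verifications are essentially bookkeeping analogous to the disc case.
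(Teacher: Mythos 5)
Your proposal is correct and follows the same strategy the paper intends (the paper only remarks that the deformation retractions of the disc case "can be modified in a straightforward way"); you slide each diamond's center to the $\{t=0\}$ hyperplane and retract onto an embedded copy of $\cP_{\mathsf{Disc_{n-1}}}$. Your key inequality --- two diamonds of radii $r_1,r_2$ centered at $(t_1,\vec{x}_1),(t_2,\vec{x}_2)$ are causally disjoint iff $\|\vec{x}_1-\vec{x}_2\|\ge r_1+r_2+|t_1-t_2|$ --- is right, and it correctly yields both the shift lemma and your nice observation that, unlike the disc case, no auxiliary reshaping homotopy is needed.
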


%%%%%%%%%%%%%%%%%%%%%%%%%%%%%%
\section{Examples}

\begin{example}
As the operad $\cP_{\mathsf{CD_n}}$ is homotopy equivalent to an $\mathbb{E}_\mathsf{n-1}$-operad, we know that the associated homology operad $\cH ({\mathsf{CD_n}}):= H_\ast (\cP_{\mathsf{CD_n}}; \CC)$ is the Poisson-$(n-1)$ operad if $n \ge 3$ and $\cH (\mathsf{CD_2}) \cong \mathsf{As}$ is the associative operad. (This latter identification is actually quite simple if one considers carefully the causal structure on 2-dimensional Minkowski space.)

Recall the forgetful map $\omega \colon \mathsf{CD_n} \to \mathsf{Disc_n}$.  This map induces a map the same way in homology and a contravariant map at the level of algebras, i.e., $\omega^\ast \colon \mathsf{Alg}_{\cH ( \mathsf{Disc_n})} \to \mathsf{Alg}_{\cH (\mathsf{CD_n})}$. A natural class of Poisson-$n$ algebras arise from functions on shifted phase spaces, e.g., $\cO (T^\ast [n] M)$. If $n \ge 3$, the pullback of this algebra of functions on $\RR^{1,n-1}$ shifts the degree of the Poisson bracket by one.  In the case $n=2$, the pullback to causally disjoint discs in $\RR^{1,1}$ forgets the Poisson bracket entirely and only remembers the associative multiplication on functions.
\end{example}

\begin{example}
In \cite{GR}, Gwilliam and Rejzner construct a cosheaf of (dg) multilocal functionals $\mathbb{ML}oc$.  As they explain in {\it ibid} Section 5, $\mathbb{ML}oc$ when equipped with a certain differential generalizes the net of classical observables common to the AQFT literature.  Moreover, Gwilliam and Rejzner explain that there are structure maps
\[
\iota_{\{U_i\};V} \colon \bigotimes_i \mathbb{ML}oc_c (U_i) \to \mathbb{ML}oc_c (V)
\]
for disjoint opens $U_1, \dotsc , U_k$ contained in another open $V$.  Since these maps exist on compactly supported functionals, we can extend by zero to the boundary so we have structure maps defined for disjoint discs in a larger disc.  Now, it follows from classical results on the variational bicomplex, see Theorem 2.19 of \cite{GR}, that the inclusion of discs $ i \colon D \hookrightarrow D'$ induces a quasi-isomorphism $i_\ast \colon \mathbb{ML}oc (D) \xrightarrow{\; \sim \;} \mathbb{ML}oc (D')$. To summarize, in two dimensions, the cosheaf $\mathbb{ML}oc$ defines a $\cP_\mathsf{CD_2}$ algebra up to homotopy.  The corresponding $\mathbb{E}_\mathsf{1}$-structure on $\mathbb{ML}oc$ on $\RR^{1,1}$ is the familiar one which can be obtained by restricting to discs in the $\{t=0\}$ plane.
\end{example}

These examples suggest that the map $\omega \colon \mathsf{CD}_n \to \mathsf{Disc}_n$, and the associated map at the level of algebras, may provide a description of Wick rotation in the language of factorization algebras for field theories that are ``sufficiently topological."  More specifically, for a class of nice enough theories $\cE$ on $\RR^{n+1}$ the Wightman functions/observables defined on $\RR^{1,n}$ are the pullback via $\omega$ of the Schwinger functions/observables for $\cE$.  There remain many details to make this speculation precise and it remains work in progress.

\end{document}